\newcommand\C{\mathbb{C}}
\newcommand\Z{\mathbb{Z}}
\newcommand\N{\mathbb{N}}
\tikzset{anchorbase/.style={>=stealth,baseline={([yshift=-0.5ex]current bounding box.center)}}}
\theoremstyle{definition}
\newtheorem{theo}{Theorem}[section]
\newtheorem{prop}[theo]{Proposition}
\newtheorem{lem}[theo]{Lemma}
\newtheorem{defin}[theo]{Definition}
\numberwithin{equation}{section}
\newtheorem{problem}{Problem}
  \newcommand{\acomments}[1]{
    \ \\
    {\color{red}
      \textbf{AS:} #1
    }
    \ \\
    }
  \newcommand{\mcomments}[1]{
    \ \\
    {\color{red}
      \textbf{MM:} #1
    }
    \ \\
    }
  \newcommand{\acomments}[1]{}
  \newcommand{\mcomments}[1]{}
  \newcommand{\details}[1]{
      \ \\
      {\color{green}
        \textbf{Details:} #1
      }
      \\
  }
  \newcommand{\details}[1]{}
\title{Higher Casimir Operators and the Okounkov Polynomials}
\author{Shivang Jindal}
\address{Department of Mathematics and Statistics, IIT Kanpur, India}
\email{jshivang@iitk.ac.in}
\begin{document}

\begin{abstract}
In this paper, we study a certain specialization of Okounkov polynomials and its connection with the eigenfunctions of higher Casimir operators \cite[14.12]{fuchs2003symmetries} for the complex semisimple lie algebra $\mathfrak{gl_n}$. 
\end{abstract}

\maketitle

\tableofcontents
\addtocontents{toc}{\protect\setcounter{tocdepth}{1}}

\section*{Acknowledgement}
The author would like to thank Professor Hadi Salmasian for his supervision and guidance through out this project. This work was done at The University of Ottawa as a part of Mitacs Globalink Research Internship.  

\section{Introduction}

We consider $\mathfrak{U(gl_n)}$, the universal envoloping algebra of the semisimple lie algebra $\mathfrak{gl_n}$. The Casimir operators are the elements of the centre of the enveloping algebra $\mathfrak{U(gl_n)}$ and are defined by $C_k := \text{tr}(\textbf{E}^k)$, where $\textbf{E} := [ E_{ij} ]$.
Thus we have \[ C_k = \sum_{i_1,i_2,\cdots,i_k} E_{i_1,i_2} \cdots E_{i_{k-1},i_k} E_{i_k,i_1}. \] It is well known  \cite{Gelfand:1950ihs} \cite[14.12]{fuchs2003symmetries} that the Casimir operators $C_1, \dots, C_n$ generate the centre of $\mathfrak{U(gl_n)}$. We set $\textbf{r} := (r_1, \dots, r_n) = (n-1, \dots, 0)$. From the theorem of highest weight, for any complex semisimple Lie algebra $\mathfrak{g}$ there is a bijection from the set of dominant integral weights to the set of equivalence classes of irreducible representation of $\mathfrak{g}$. For $\mathfrak{g} = \mathfrak{gl_n}$ this amounts to saying that there is a standard bijection between $\lambda := (\lambda_1, \dots, \lambda_n)$ satisfying $\lambda_1 \geq \lambda_2 \dots \geq \lambda_n \geq 0$ and the irreducible representations of $\mathfrak{gl_n}$.The center of $\mathfrak{U(gl_n)}$ acts as a scalar on the highest weight module $V_{\lambda}$ corresponding to $\lambda$. The eigenvalue of $C_k$ on $V_{\lambda}$ can thus be just by computing its action on the highest weight. Since we know the action of standard generators of $\mathfrak{gl_n}$ on the highest weight, using recursion we find that the eigenvalue of $C_k$ on $V_{\lambda}$ is given by \[c_k(\lambda) = \text{tr}(A^kF), \] where \[ A :=  \begin{bmatrix}\lambda_1+r_1 & -1 & -1& \cdots & -1 \\ 0 & \lambda_2+r_2 & -1 & \cdots & -1 \\ \vdots & \vdots & \ddots & \ddots & \vdots \\ 0 & \cdots & 0 & \lambda_{n-1}+r_n & -1 \\ 0 & \cdots & \cdots & 0 & \lambda_n + r_n \end{bmatrix}  \] and $F$ is the $n \times n$ matrix all of whose entries are equal to $1$ \cite[Chapter 9.4, Theorem 1]{doi:10.1142/0352}. 

On the other hand we consider \textit{Okounkov polynomials} $P_{\lambda}(x;\tau;\alpha)$ which are natural $BC_n$-type analogs of the interpolation Macdonald polynomials \cite{1996q.alg....11011O}. Let $n,r$ be such that $n \geq 2r$. Let $G_n$ denote $\text{GL}_n(\C)$ and $K_n$ denote the subgroup of unitary matrices that is \[ K_n =  \{ g \in \text{GL}_n(\C) : g^{\dagger}g = I_{n \times n} \} \]

Consider the space $Y$, the Grassmannian of $r$-dimensional subspaces of $\C^n$. $Y$ is a compact symmetric space of type $\text{BC}_r$\cite{bump2004lie}.The Group $K_n$ acts by left translation on $C^{\infty}(Y)$, the space of complex-valued smooth functions on $Y$.  Using \cite[Chapter 5]{helgason2000groups} we see that $C^{\infty}(Y)_{\textrm{K-finite}}$ decomposes as a multiplicity-free direct sum of irreducible $K_r \times K_{n-r}$ spherical $K_n$ modules and are naturally parametrized by partitions $\mu \in \mathcal{P}_r$. With the basis vectors as in \cite[12.3.2]{goodman2009symmetry} , We find that for $\mathbb{F} = \C$ , highest weight of irreducible $K_r \times K_{n-r}$-spherical $K_n$ modules is given by 
\[ \tilde{\mu} = (\mu_1,\ldots,\mu_r,\underbrace{0,\ldots,0}_{\text{$n-2r$ times}},-\mu_r,\ldots,-\mu_1) \]  \cite[12.3.2, Type CII]{goodman2009symmetry} 

Therefore as $K_n$ modules, 
\[ C^{\infty}(Y)_{\textrm{K-finite}} \simeq \bigoplus_{\mu \in \mathcal{P}_r} V_{\mu} \] 

In \cite{2016arXiv160900939S}, the authors constructs a family of $K_n$ invariant differential operators $D_{\lambda,s}$ called quadratic Capelli operators and show that specialization of Okounkov type BC interpolation polynomials are eigenvalues of these operators acting on $V_{\mu}$.  We state the result for $\mathbb{F} = \C$.

\begin{theo}[Sahi-Salmasian]\label{hadi}
For every $\lambda,\mu \in \mathcal{P}_r$ and every $s \in \C$, the operator $D_{\lambda,s}$ acts on $V_{\mu}$ by the scalar \[ c_{\lambda,s}(\mu) = y_{\lambda}P_{\lambda}\left(\mu+\rho;1 ; s- \frac{n-1}{2}\right), \] where $\rho  = (\rho_1,\rho_2,\ldots,\rho_r)$,  $\rho_i = \frac{n-(2i-1)}{2}$, and $y_{\lambda}$ is a constant and is defined as \cite[Section 4]{2016arXiv160900939S} \[ y_{\lambda} := \frac{(-4)^{|\lambda|}}{\prod_{\mathfrak{b} \in \lambda} (a_{\lambda}(\mathfrak{b})+1+l_{\lambda}(\mathfrak{b}))} \]
\end{theo}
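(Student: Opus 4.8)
The plan is to identify the eigenvalue function $\mu\mapsto c_{\lambda,s}(\mu)$ by means of the characterization theorem for Okounkov's interpolation polynomials: $P_\lambda(\,\cdot\,;1;\alpha)$ is the unique $W(BC_r)$-invariant polynomial, of degree $|\lambda|$ in the squared variables $x_i^2$ with a prescribed top-degree part, which vanishes at $x=\mu+\rho$ for every partition $\mu\in\mathcal P_r$ with $\lambda\not\subseteq\mu$ (the extra-vanishing theorem). Thus it suffices to check, on the differential-operator side, that $c_{\lambda,s}(\mu)/y_\lambda$ has (i) the right polynomial degree in the shifted coordinates $\mu_i+\rho_i$ together with the correct top-degree term, (ii) the extra vanishing for $\lambda\not\subseteq\mu$, and (iii) the prescribed value at one partition, say $\mu=\lambda$. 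A preliminary bookkeeping step is to verify that the $\rho$ with $\rho_i=\tfrac{n-(2i-1)}{2}$ and the third slot $s-\tfrac{n-1}{2}$ are exactly the shift and spectral parameter occurring in Okounkov's normalization of $P_\lambda$, so that the two characterizations really refer to the same polynomial.

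First I would handle polynomiality and degree. Since $C^{\infty}(Y)_{\textrm{K-finite}}\simeq\bigoplus_\mu V_\mu$ is multiplicity-free, every $K_n$-invariant differential operator on the rank-$r$ compact symmetric space $Y$, and in particular each $D_{\lambda,s}$, acts on $V_\mu$ by a scalar; by the Harish-Chandra isomorphism for invariant differential operators this scalar is a $W(BC_r)$-invariant polynomial in $\mu+\rho$, visibly also polynomial in the auxiliary parameter $s$. That its degree is $|\lambda|$ in the variables $(\mu_i+\rho_i)^2$ — equivalently $2|\lambda|$ in $\mu+\rho$ — follows from $D_{\lambda,s}$ being ``quadratic'' of order $2|\lambda|$: the principal symbol contributes a homogeneous term of degree $2|\lambda|$ and nothing higher, and one reads off the top-degree part of $c_{\lambda,s}$ from that principal symbol, which by construction is (a scalar multiple of) the $K_n$-invariant on $T^*Y$ attached to $\lambda$.

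The main work, and the step I expect to be the chief obstacle, is the extra vanishing: $c_{\lambda,s}(\mu)=0$ whenever $\lambda\not\subseteq\mu$. The cases $|\mu|<|\lambda|$ reflect the triangularity built into the Capelli construction of $D_{\lambda,s}$ with respect to the order filtration. The genuinely extra cases, $\lambda\not\subseteq\mu$ with $|\mu|\ge|\lambda|$, are the analogue of the Knop--Sahi extra-vanishing phenomenon, and here one must use the specific structure of $D_{\lambda,s}$: either (a) a factorization of $D_{\lambda,s}$ as a composite of ``lowering'' and ``raising'' shift operators on the $V_\mu$, which manifestly annihilates $V_\mu$ unless the Young diagram of $\lambda$ fits inside that of $\mu$; or (b) a Pieri-type recursion among the $D_{\lambda,s}$ permitting an induction on $|\mu|-|\lambda|$ that exploits the interlacing constraints on the labelling partitions. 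In either route the essential input is that the symbol of $D_{\lambda,s}$ lies in the ideal cutting out the appropriate $K_n$-subvariety of $T^*Y$, which is precisely what the Capelli-determinant construction is designed to achieve.

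Finally, with (i) and (ii) in hand, the characterization theorem forces $c_{\lambda,s}(\mu)=\kappa(\lambda,s,n)\,P_\lambda(\mu+\rho;1;s-\tfrac{n-1}{2})$ for a scalar $\kappa$ independent of $\mu$, and one computes $\kappa$ by evaluating both sides at $\mu=\lambda$. Okounkov's principal-specialization (norm) formula expresses $P_\lambda(\lambda+\rho;1;\alpha)$ as an explicit product over the boxes of $\lambda$, while $c_{\lambda,s}(\lambda)$ is computed directly from the Capelli construction; matching the two yields $\kappa=y_\lambda$, the normalization $y_\lambda=(-4)^{|\lambda|}\big/\prod_{\mathfrak b\in\lambda}\!\big(a_\lambda(\mathfrak b)+1+l_\lambda(\mathfrak b)\big)$ being chosen exactly for this purpose. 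All identities involved are polynomial in $s$, so it suffices to argue for generic $s$, where the characterization applies cleanly, and then conclude for every $s\in\C$ by Zariski density.
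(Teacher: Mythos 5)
This statement is not proved in the paper at all: it is Theorem~\ref{hadi}, quoted verbatim from Sahi--Salmasian \cite{2016arXiv160900939S} and used as a black box, so there is no internal proof to compare against. Judged on its own terms, your proposal correctly identifies the standard strategy (and the one actually used in the cited source): characterize $P_\lambda(\cdot;1;\alpha)$ by $W(BC_r)$-invariance, degree $\le 2|\lambda|$ in $\mu+\rho$, and extra vanishing at $\mu+\rho$ for $\lambda\not\subseteq\mu$, then fix the constant by evaluating at $\mu=\lambda$ against Okounkov's principal specialization formula. The logical skeleton is sound.

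However, as written this is an outline rather than a proof, and the gaps are exactly at the points that carry all the content. First, the operators $D_{\lambda,s}$ are never defined anywhere in this paper, so none of your claims (i)--(iii) can even be stated precisely without importing the Capelli-type construction from \cite{2016arXiv160900939S}; your assertions about the principal symbol and the order $2|\lambda|$ are assumptions about that construction, not consequences you derive. Second, the extra-vanishing step --- which you yourself flag as the chief obstacle --- is handled by offering two alternative mechanisms, ``(a) a factorization \dots or (b) a Pieri-type recursion,'' without establishing either; saying that the essential input ``is precisely what the Capelli-determinant construction is designed to achieve'' is circular until one exhibits that factorization or recursion. Third, the normalization $\kappa=y_\lambda$ requires an actual computation of $c_{\lambda,s}(\lambda)$ from the construction of $D_{\lambda,s}$, which is not performed. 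To turn this into a proof you would need to reproduce the definition of $D_{\lambda,s}$ and carry out one of your routes (a) or (b) in detail; within the scope of the present paper, the honest justification of Theorem~\ref{hadi} is simply the citation.
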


From the above theorem, one expects that the Okounkov polynomials are related with elements of the center of enveloping algebra. In this paper, we attempt to write this specialization of Okounkov polynomials as the linear combination of the eigenvalues of the Casimir operator. Our result is the following, 

\begin{theo}\label{theorem}
Let $r,n$ be such that $2r \leq n$. Given a partition $\lambda \in \mathcal{P}_r$ where \[ \mathcal{P}_r := \{ (\lambda_1,\ldots,\lambda_r) \in \Z^{r} : \lambda_1 \geq \ldots \geq \lambda_r \geq 0 \}. \]Let $s_{\lambda} = \sum_{\mu} a_{\mu}p_{\mu}$  where $a_\mu$ are the coefficients , $s_{\lambda}$ and $p_{\lambda}$ are the Schur function and power sum symmetric function for the partition $\lambda$ in $r$ variables respectively. Then 
\[ P_{\lambda}(\mu+\rho;1;s-\frac{n-1}{2}) = \sum_{\mu} \frac{a_{\mu}}{2^{s(\mu)}} C_{2\mu} + \text{LowerOrderTerms} \]
where $C_{\lambda}$ denotes the Restricted Casimir polynomial, which we define in the Section \ref{resCas}. 
\end{theo}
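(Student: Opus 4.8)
The plan is to prove the identity by comparing the top-degree homogeneous components of the two sides, regarded as polynomials in the variables $x_i:=\mu_i+\rho_i$ (equivalently in $\mu$, since $\mu\mapsto x$ is a shift); everything of strictly lower degree is absorbed into the term the statement calls $\text{LowerOrderTerms}$, so it suffices to show that both sides have top-degree part equal to $s_\lambda(x_1^2,\dots,x_r^2)$. The starting observation is the shape of the Casimir eigenvalue $c_k(\lambda)=\tr(A^kF)=\sum_{i,j}(A^k)_{ij}$: since $A$ is upper triangular with diagonal entries $a_i:=\lambda_i+r_i$ and entries $-1$ strictly above the diagonal, $(A^k)_{ij}$ is a sum over non-decreasing lattice paths $i=q_0\le q_1\le\cdots\le q_k=j$ in which a step with $q_{t-1}=q_t$ contributes the factor $a_{q_{t-1}}$ and a strict ascent contributes $-1$. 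Hence the only contribution of degree $k$ in the $a_i$ comes from the constant paths ($i=j$), giving $c_k(\lambda)=p_k(a_1,\dots,a_n)+\bigl(\text{a polynomial of degree}\le k-1\text{ in the }a_i\bigr)$.

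Next I would feed the Grassmannian weight into this. By its definition in Section~\ref{resCas}, the Restricted Casimir polynomial $C_k$ is obtained from $c_k$ by substituting $\tilde\mu=(\mu_1,\dots,\mu_r,0,\dots,0,-\mu_r,\dots,-\mu_1)$ for $\lambda$, so it is a polynomial in the coordinates of $\mu$. A short computation identifies the entries of $\tilde\mu+\mathbf{r}$ as $x_i+\delta$ for $i\le r$, a $\mu$-independent constant for $r<i\le n-r$, and $\delta-x_i$ for the last $r$ indices, where $\delta:=\tfrac{n-1}{2}$: indeed $\mu_i+(n-i)=(\mu_i+\rho_i)+\delta$, while the reflected entries give $-\mu_i+(i-1)=\delta-(\mu_i+\rho_i)$. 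Substituting into the formula for $c_k$ and keeping only the degree-$k$-in-$x$ part, the top component of $C_k(\mu)$ equals the degree-$k$ part of $\sum_{i=1}^r\bigl[(x_i+\delta)^k+(\delta-x_i)^k\bigr]$: for odd $k$ these leading terms cancel---which is precisely why only the even-index Casimirs occur---and for $k=2m$ they reinforce, yielding $2\sum_{i=1}^r x_i^{2m}=2\,p_m(x_1^2,\dots,x_r^2)$.

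It follows that the top homogeneous component of $\tfrac{1}{2^{s(\mu)}}C_{2\mu}=\tfrac{1}{2^{\ell(\mu)}}\prod_i C_{2\mu_i}$ is $p_\mu(x_1^2,\dots,x_r^2)$, where $s(\mu)=\ell(\mu)$ is the number of parts of $\mu$; summing against the $a_\mu$ and using the defining expansion $s_\lambda=\sum_\mu a_\mu p_\mu$ gives
\[ \sum_\mu\frac{a_\mu}{2^{s(\mu)}}\,C_{2\mu}=s_\lambda(x_1^2,\dots,x_r^2)+\text{LowerOrderTerms}. \]
For the left-hand side I would invoke the standard property of Okounkov's interpolation polynomials that the top homogeneous component of $P_\lambda(x;\tau;\alpha)$ does not depend on the shift parameter $\alpha$ and equals the $BC_r$-Jack polynomial $P^{(1/\tau)}_\lambda(x_1^2,\dots,x_r^2)$, which at $\tau=1$ reduces to $s_\lambda(x_1^2,\dots,x_r^2)$. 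Setting $x_i=\mu_i+\rho_i$ and $\alpha=s-\tfrac{n-1}{2}$, the two sides of the asserted identity then carry the same top-degree part and hence differ only by $\text{LowerOrderTerms}$.

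The hardest point will be the invocation on the $P_\lambda$ side: pinning down, with the conventions of \cite{1996q.alg....11011O,2016arXiv160900939S}, that the top component of $P_\lambda(x;1;\alpha)$ is \emph{exactly} $s_\lambda(x_1^2,\dots,x_r^2)$ with no extraneous scalar, and that the third parameter really does drop out of the top component. A secondary technical point is the careful verification of the reflection structure of $\tilde\mu+\mathbf{r}$ that produces the factor $2$ in $C_{2m}$ and the degree drop for the odd-index Casimirs. Finally, I would note that since the $C_{2m}$ algebraically generate the relevant ring of $BC_r$-symmetric polynomials by a triangularity argument, one can in principle iterate---subtract the leading Casimir combination and recurse on the lower-degree remainder---to express $\text{LowerOrderTerms}$ itself through restricted Casimirs; it is the bookkeeping of those lower coefficients that is left out here.
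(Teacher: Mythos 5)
Your proposal is correct and shares the paper's overall strategy: reduce the identity to a comparison of top-degree homogeneous components, showing that both sides have leading part $s_\lambda(x_1^2,\dots,x_r^2)=\sum_\mu a_\mu p_{2\mu}$ in the shifted variables $x_i=\mu_i+\rho_i$, and that $C_{2\mu}=2^{\ell(\mu)}p_{2\mu}+{}$lower order. The differences lie in how the two leading-term computations are carried out. For the Casimir side, you extract the leading term of $c_k$ from the trace formula $c_k(\lambda)=\textrm{tr}(A^kF)=\sum_{i,j}(A^k)_{ij}$ by a lattice-path expansion over non-decreasing sequences, observing that only constant paths attain degree $k$; the paper instead starts from Scheunert's explicit eigenvalue formula $c_p(\lambda)=\sum_i a_i(\lambda_i+\rho_i+\tfrac{n-1}{2})^p$ and shows via an inclusion--exclusion/degree-counting argument on $\prod_{j\neq i}(1-\tfrac{1}{x_i-x_j})-1$ that the correction has degree $<p$. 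Both are valid; your path argument is arguably more self-contained since it works directly from the definition of $C_k$ given in the introduction, while the paper's route leans on the cited closed formula. The reflection computation for $\tilde\mu+\mathbf{r}$ (cancellation for odd $k$, doubling for even $k$) coincides with the paper's Lemma \ref{lma}. For the Okounkov side, you invoke the known degeneration of the top component of $P_\lambda(x;\tau;\alpha)$ to the Jack polynomial in $x_i^2$ (Schur at $\tau=1$), whereas the paper proves this directly (Proposition \ref{prop}) from the Okounkov--Olshanski combinatorial formula, using that $\Phi_T(1)=1$ and the bijection $T(\mathfrak{b})\mapsto r+1-T(\mathfrak{b})$ between reverse and semistandard tableaux; you correctly flag the normalization of this degeneration as the point needing care, and the paper's combinatorial derivation is precisely how one pins it down, monic with no extraneous scalar. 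No gap in substance; only that one invocation would need to be replaced by, or checked against, the combinatorial argument to make the proof fully self-contained.
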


The structure of the paper is as follows. 
We first give an overview of Okounkov polynomials in Section \ref{2} which are polynomials invariant under Weyl group of restricted root system \cite[Chapter 32]{bump2004lie} of type $\text{BC}_r$ and give their Combinatorial interpretation. Then we define Restricted Casimir polynomials in Section \ref{3} and finally prove the theorem in Section \ref{4}, establishing the connection between these two types of polynomials.

\section{Okounkov Polynomials} \label{2}
In this section, we define and give the combinatorial description of Okounkov polynomials. These definitions are taken directly from \cite[Section 4]{2016arXiv160900939S} and \cite{2014arXiv1408.5993K}.
\begin{defin}
Let $r \in \N$. Given a partition $\lambda = (\lambda_1,\lambda_2,\ldots,\lambda_r) \in \mathbb{Z}^r$. A Laurent polynomial $f(x_1,x_2,\ldots,x_r)$ in variables $x_1,x_2,\ldots,x_r$ is called $\lambda$-monic if the coefficient of $x_1^{\lambda_1}\ldots x_r^{\lambda_r}$ in $f(x_1,x_2,\ldots,x_r)$ is equal to $1$. 
\end{defin}

In \cite{Okounkov1998} Okounkov defined a family of Laurent polynomials \[ P_{\lambda}^{\textrm{ip}}(x;q,t,a) \in \C(q,t,a)[x_1^{\pm 1} \ldots x_r^{\pm 1}] \] parametrized by partitions $\lambda \in \mathcal{P}_r $ where every $P_{\lambda}^{ip}$ is the unique Laurent polynomial of degree $|\lambda|$ which is invariant under action of Weyl group $W = S_r \rtimes {\pm1}^r$ of type $BC_r$ on the $x_i$'s by permutation and inversions, and satisfies the vanishing condition \[ P^{\textrm{ip}}_{\lambda}(aq^{\mu}t^{\delta};q,t,a) = 0 \textrm{ unless } \lambda \subseteq \mu,\] where $\delta := (r-1,\ldots,0)$ , $\mu \in \mathcal{P}_r$ and $q^{\mu}t^{\delta} = (q^{\mu_1}t^{\delta_1}, \ldots, q^{\mu_r}t^{\delta_r})$. 

By taking limit \cite[Def. 7.1]{2014arXiv1408.5993K} $q \rightarrow 1$ of $P^{\textrm{ip}}_{\lambda}$ we have
\begin{defin}[Okounkov type BC interpolation polynomials] $P_{\lambda}(x;\tau;\alpha) \in \C(\tau,\alpha)[x_1,\ldots,x_r]$ are defined as \[ P_{\lambda}(x;\tau;\alpha) = \lim_{q \uparrow 1} (1-q)^{-2|\lambda|}P_{\lambda}^{\textrm{ip}}(q^x;q,q^{\tau},q^{\alpha}) \] where $|\lambda| := \sum_{i} \lambda_i$.\end{defin}

These polynomials are invariant under permutations and sign changes of $x_1,\ldots,x_r$. For the definitions of symmetric polynomials and their properties we refer \cite[Chapter 1]{MacSymHall}
We now give the combinatorial formula for $P_{\lambda}(x;\tau,\alpha)$ as given in \cite{2014arXiv1408.5993K} 

Every partition $\lambda$ can be represented by a Young diagram consisting of boxes $\mathfrak{b} = \mathfrak{b}(i,j)$, where \[ \frak{b}(i,j) \in \{ (p,q) \in \mathbb{Z}^2 : 1 \leq p \leq l(\lambda) \ \mbox{and} \  1 \leq q \leq \lambda_i \} .\]
Arm Length of a box $\mathfrak{b} = \mathfrak{b}(i,j)$ is $a_{\lambda}(\mathfrak{b}) = \lambda_i-j$ and leg length is $l_{\lambda}(\mathfrak{b}) = |\{ k > i : \lambda_k \geq j \} |. $
Arm co-length $a_{\lambda}'(\mathfrak{b}) = j-i$ and leg co-length $l_{\lambda}'(\mathfrak{b}) = i-1$. 

We consider the reverse tableau of shape $\lambda$ with entries in $\{ 1,2,\ldots, r\}$ which is defined as tableau \cite{Stanley:2011:ECV:2124415} with weakly decreasing rows and strongly decreasing columns. 
We now take any reverse tableau $T$ and let $\lambda^{(k)} \subset \lambda$ be the partition corresponding to the boxes $\mathfrak{b} \in \lambda$ that satisfy $T(\mathfrak{b}) > k$. 
For two partitions $\nu \subset \mu$ , denote by $(R{\backslash }C)_{\mu \backslash \nu}$ to be the set of boxes which are in a row of $\mu$ intersecting with $\mu \backslash \nu$, but not in a column of $\mu$ intersecting with $\mu \backslash \nu$. Then we have 

\[ b_{\mu}(\mathfrak{b};\tau) = \frac{a_\mu(\mathfrak{b})+\tau(l_\mu(\mathfrak{b}) + 1)}{a_\mu(\mathfrak{b}) + \tau l_\mu(\mathfrak{b}) + 1} \] and \[ \Phi_{T}(\tau) = \prod_{i=1}^{r} \prod_{\mathfrak{b} \in (R{\backslash}C)_{\lambda^{(i-1)}{\backslash}\lambda^{(i)}}}  \frac{b_{\lambda^{(i)}}(\mathfrak{b};\tau)}{b_{\lambda^{(i-1)}}(\mathfrak{b}; \tau)}. \] 

Then our polynomial $P_{\lambda}(x;\tau;\alpha)$, is  

\[ \sum_{T} \Phi_T(\tau) \prod_{\mathfrak{b} \in \lambda} (x_{T(\mathfrak{b})}^2-(a_{\lambda}'(\mathfrak{b}) + \tau(r-T(\mathfrak{b})-l_{\lambda}'(\mathfrak{b}))+\alpha)^2). \] where the sum is over all reverse tableaux $T$ of shape $\lambda$ with entries in $ \{ 1,\ldots,r \}.$

\section{Restricted Casimir Polynomials} \label{3}
\label{casimir}
In this section we state the explicit formula as given in \cite{Scheunert} for the eigenvalues of higher Casimir operators. 
Given irreducible representation $V_{\lambda}$ of $\mathfrak{gl_n}$ where $\lambda = (\lambda_1,\ldots,\lambda_n)$. Then the eigenfunction of the Casimir operator $C_p$ is defined as 
\[ c_p(\lambda) = \sum_{i} a_i\left(\lambda_i+\rho_i+\frac{n-1}{2}\right)^{p} \]
where 
\[ a_ i = \prod_{j \neq i} \left(1 - \frac{1}{\lambda_i-\lambda_j+\rho_i-\rho_j}\right) \]
and $\rho$ is half the sum of positive roots  which is $(\frac{n-1}{2},\ldots, \frac{n-(2k-1)}{2},\ldots,\frac{1-n}{2})$ in our case.

Note that for $\lambda = (\lambda_1,\ldots,\lambda_n)$ 
\[ c_p(\lambda- \rho) = \sum_{i} a_i\left(\lambda_i + \frac{n-1}{2}\right)^{p} \] where $a_i = \prod_{j \neq i} (1 - \frac{1}{\lambda_i - \lambda_j}) $
which is clearly a symmetric polynomial in variables $\lambda_1,\ldots,\lambda_n$. 

\begin{lem}
The set of polynomials $c'_i(\lambda) = c_i(\lambda - \rho)$ where $\lambda = (x_1,\ldots,x_n)$ in variables $x_1,\ldots,x_r$ for $i \geq 0$ are algebraically independent. 
\end{lem}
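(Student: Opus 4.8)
The plan is to prove the real content of the statement: that $c'_1,\dots,c'_n$, regarded as symmetric polynomials in the $n$ variables $x_1,\dots,x_n$, are algebraically independent. Since $\C[x_1,\dots,x_n]^{S_n}$ has transcendence degree $n$ and (as the computation below shows) $c'_0=n$ is constant, this is best possible, and $c'_1,\dots,c'_n$ then form a transcendence basis of the algebra generated by the whole family $\{c'_i\}_{i\ge 0}$. The main tool is a Perelomov--Popov type generating function.

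\textbf{Step 1 (generating function).} Put $y_i=x_i+\tfrac{n-1}{2}$, so that $y_i-y_j=x_i-x_j$ and $c'_k=\sum_i a_i y_i^{\,k}$ with $a_i=\prod_{j\ne i}\bigl(1-(y_i-y_j)^{-1}\bigr)$. The rational function $1-\prod_{i=1}^n\bigl(1-(u-y_i)^{-1}\bigr)$ vanishes to order $1$ at $u=\infty$, has only simple poles, located at the $y_i$, and a short residue computation shows that its residue at $u=y_i$ is exactly $a_i$. Hence
\[ \sum_{k\ge 0}c'_k(x)\,u^{-k-1}=1-\prod_{i=1}^n\Bigl(1-\frac{1}{u-y_i}\Bigr). \]
Reading off the coefficient of $u^{-1}$ gives $c'_0=n$, and the right-hand side exhibits each $c'_k$ as a genuine symmetric polynomial in $x_1,\dots,x_n$ (the spurious denominators $x_i-x_j$ in the $a_i$ cancel in the sum).

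\textbf{Step 2 (leading terms).} Expand by inclusion--exclusion,
\[ 1-\prod_{i=1}^n\Bigl(1-\frac{1}{u-y_i}\Bigr)=\sum_{\varnothing\ne S\subseteq\{1,\dots,n\}}(-1)^{|S|+1}\prod_{i\in S}\frac{1}{u-y_i}. \]
The singleton sets give $\sum_i(u-y_i)^{-1}=\sum_{k\ge0}p_k(y)\,u^{-k-1}$, while a set $S$ with $|S|=s\ge2$ contributes to the coefficient of $u^{-k-1}$ only a symmetric polynomial of degree $k+1-s\le k-1$ in the variables $\{y_i\}_{i\in S}$. Summing, $c'_k=p_k(y)+(\text{terms of degree}\le k-1)$; and since $y_i=x_i+\tfrac{n-1}{2}$, also $c'_k=p_k(x_1,\dots,x_n)+(\text{terms of degree}\le k-1)$ as a polynomial in the $x_i$.

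\textbf{Step 3 (triangularity).} In characteristic $0$ the power sums $p_1,\dots,p_n$ are algebraically independent generators of $\C[x_1,\dots,x_n]^{S_n}$. Suppose $P(c'_1,\dots,c'_n)=0$ for a nonzero polynomial $P$; assign weight $j$ to the $j$-th slot and let $P^{\mathrm{top}}$ be the nonzero sum of the monomials of $P$ of maximal weight $d$. By Step 2, the degree-$d$ homogeneous component of $P(c'_1,\dots,c'_n)$ equals $P^{\mathrm{top}}(p_1,\dots,p_n)$, which is nonzero because the $p_j$ are algebraically independent --- a contradiction. Hence $c'_1,\dots,c'_n$ are algebraically independent; the same triangularity also shows that they generate all of $\C[x_1,\dots,x_n]^{S_n}$, so every remaining $c'_k$ is a polynomial in them. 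The only step that takes real work is Step 1: one must organise the partial-fraction/residue bookkeeping so that the normalisation comes out right --- the identity there is the classical Perelomov--Popov (equivalently Harish-Chandra) description of the Casimir eigenvalues of $\mathfrak{gl}_n$ written in the shifted variables $y_i$ --- after which the cancellation of the $x_i-x_j$ and all of Steps 2--3 are routine, Step 3 being the standard device for upgrading ``the leading form of $c'_k$ is a free generator'' to ``the $c'_k$ are algebraically independent''.
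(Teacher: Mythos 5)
Your proof is correct, and it reaches the same key intermediate fact as the paper --- that $c'_k = p_k(x_1,\dots,x_n) + (\text{lower degree})$, after which algebraic independence of the power sums plus the weight/triangularity device finishes the job --- but by a genuinely different route. The paper splits off $\sum_i (x_i+\tfrac{n-1}{2})^p$ directly, expands $\prod_{j\ne i}(1-a_{i,j})-1$ over nonempty subsets $B$, and bounds the degree of the remainder by a somewhat ad hoc argument (clearing the denominators $\prod a_{i,j}$ and counting degrees). You instead package everything into the Perelomov--Popov generating function $\sum_k c'_k u^{-k-1} = 1-\prod_i\bigl(1-(u-y_i)^{-1}\bigr)$ via a residue computation, and then the degree bound on the non-singleton terms falls out immediately from the $u^{-s}$ prefactor. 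Your version buys two things: the cancellation of the spurious denominators $x_i-x_j$ and the degree estimate become transparent rather than requiring the paper's clearing-denominators trick, and you make explicit the triangularity argument (top-weight part of a putative relation) that the paper only gestures at with ``since power sums are algebraically independent we conclude.'' You also correct a genuine imprecision in the statement itself: as written, the lemma claims the infinite family $\{c'_i\}_{i\ge 0}$ is algebraically independent, which cannot hold in $n$ variables (and $c'_0=n$ is constant); your restriction to $c'_1,\dots,c'_n$, together with the observation that these generate $\C[x_1,\dots,x_n]^{S_n}$ and hence all remaining $c'_k$, is the correct form of the result and is what the rest of the paper actually uses.
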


\begin{proof}

\[ c'_p = \sum_{i=1}^{n} \left(x_i+\frac{n-1}{2}\right)^p \prod_{j \neq i} \left(1 - \frac{1}{x_i - x_j}\right) \] 
We write it as 
\[ c'_p = \sum_{i=1}^{n} \left(x_i+\frac{n-1}{2}\right)^p\left(\prod_{j \neq i}\left(1 - \frac{1}{x_i - x_j}\right)-1\right) + \sum_{i=1} \left(x_i+\frac{n-1}{2}\right)^p .\] 
Since $ \sum_{i=1}^{n} (x_i+\frac{n-1}{2})^p$ is a symmetric polynomial,
\[ (c'_p)^{*} = \sum_{i=1}^{n} \left(x_i+\frac{n-1}{2}\right)^p \left(\prod_{j \neq i} \left(1 - \frac{1}{x_i - x_j}\right)-1\right) \] is also symmetric. We claim that degree of polynomial $(c'_p)^*$ is strictly less than $p$. Let \[ a_{i,j} = \frac{1}{x_i-x_j}\]Then, we can write \begin{align*} (c'_p)^* = \sum_{i=1}^{n} \left(x_i+\frac{n-1}{2}\right)^p\left(\prod_{j \neq i}\left(1-a_{i,j}\right) - 1\right) \\ = \sum_{i=1}^{n} \left(x_i+\frac{n-1}{2}\right)^p\left(\sum_{B\subseteq \{1,\ldots,i-1,i+1,\ldots,n \}- \emptyset} (-1)^{|B|} \prod_{j \in B} a_{i,j}\right) \end{align*} Now suppose that there is a non-zero degree $p$ term in the polynomial expression for $(c'_p)^*$. Then the polynomial $\frac{(c'_p)^{*}}{\prod_{i \neq j}a_{i,j}}$ will also have a non-zero degree $p+ \frac{n(n-1)}{2}$ term. But since we can't take $B$ to be empty set, the above expression shows that degree of each term in $\frac{(c'_p)^*}{\prod_{i \neq j}a_{i,j}}$ is strictly less than $p+\frac{n(n-1)}{2}$ which is a contradiction. So the term with highest degree $p$ in $c'_p$ is just 
\[ \sum_{i=1}^{n} (x_i)^p \]Thus $c'_p = p_p(x_1,\ldots,x_n) + \textrm{LowerDegreeTerms}$. Where $p_p(x_1,\ldots,x_n)$ denotes the $p$th degree power sum symmetric polynomial in variables $x_1,\ldots,x_n$. Since power sum symmetric functions are algebraically independent we conclude that polynomials $c'_p$ for $p \geq 0$ are algebraically independent. 
\end{proof}

The Casimir operator $C_{p}$ acts on the highest weight modules $V_{\mu}$. Theorem \ref{hadi} shows that the quadratic Casimir operators \cite{2016arXiv160900939S} acts on highest weight modules $V_\mu$ by a scalar and the eigenvalues are the specialization $P_{\lambda}(\mu+\rho;1;s-\frac{n-1}{2})$ of the Okounkov polynomials. Thus using the Harish-chandra theorem \cite[23.3]{humphreys1972introduction} we see that polynomials $P_{\lambda}(\mu+\rho;1;s-\frac{n-1}{2})$ are in linear span of the eigenvalues of Casimir operators. This amounts to restricting these eigenvalues to the highest weight of type $\tilde{\mu}$. Thus we now work on the restriction of these eigenvalues on the highest weight 
\[ \tilde{x} = (x_1,\ldots,x_r,\underbrace{0,\ldots,0}_{\text{$n-2r$ times}},-x_r,\ldots,-x_1) \ \  \textrm{where} \ \  x_1 \geq x_2 \geq \ldots x_r \geq 0 \] 

\begin{defin}[Restricted Casimir Polynomials] \label{resCas}
Given $n,r$. For any $i \in \N$ we define the Restricted Casimir polynomial to be the restriction of the eigenvalue of Casimir operator $C_i$ to the highest weight $\tilde{x}$ .
\[ C_i(x_1,\ldots,x_r) = c_i(x_1,\ldots,x_r,\underbrace{0,\ldots,0}_{n- 2r \textrm{ times}},-x_r,\ldots,-x_1) \]
Then for any partition $\lambda \in \mathcal{P}_r$ we define the Restricted Casimir polynomial for partition $\lambda$ as \[ C_{\lambda} = C_{\lambda_1}\ldots C_{\lambda_r} \]
\end{defin}

This leads to a natural question: 

\begin{problem}\label{problem}
Given $n,r,s$ with $n \geq 2r \geq 0 ,  n,r \in \N , s \in \C$ and a partition $\lambda \in \mathcal{P}_r$. Can we obtain an explicit formula for writing $P_{\lambda}(\mu+\rho;1;s-\frac{n-1}{2})$ as a linear combination of the Restricted Casimir polynomials $C_{\mu}$?
\end{problem}

We provide a partial answer to this. But before that we will show that it is enough to consider the even partitions and in that case, the linear combination will be unique. 

\begin{lem}\label{lma}
Given $i \in \N$, the polynomials $C_{2i}(x_1,\ldots,x_r)$ for all $i \geq 0$ are algebraically independent.
\end{lem}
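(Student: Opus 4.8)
The plan is to pin down the top-degree homogeneous part (the ``leading form'') of each $C_{2i}$, show it equals $2\,p_{2i}(x_1,\dots,x_r)$, and then conclude by a standard leading-form argument together with algebraic independence of $p_2,p_4,\dots,p_{2r}$. (Since at most $r$ polynomials in $r$ variables can be algebraically independent, and $C_0=c_0(\tilde x)=n$ is constant, the content here is that $C_2,C_4,\dots,C_{2r}$ are algebraically independent, and it suffices to prove this.)

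\emph{Step 1 (leading form).} First I would rewrite the eigenvalue formula. Since $\rho_i=\tfrac{n-(2i-1)}{2}$, one has $\rho_i+\tfrac{n-1}{2}=n-i$ and $\rho_i-\rho_j=j-i$, so setting $\nu_k:=\tilde x_k+(n-k)$ gives
\[ C_i(x_1,\dots,x_r)=c_i(\tilde x)=\sum_{k=1}^{n}\nu_k^{\,i}\prod_{j\neq k}\Bigl(1-\frac{1}{\nu_k-\nu_j}\Bigr)=g_i(\nu_1,\dots,\nu_n),\]
where $g_i(z):=\sum_{k}z_k^{\,i}\prod_{j\neq k}\bigl(1-(z_k-z_j)^{-1}\bigr)$. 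By (the proof of) the previous lemma, $g_i$ is a genuine polynomial in $z_1,\dots,z_n$ with $g_i(z)=p_i(z)+(\text{terms of total degree}<i)$. Next, each $\nu_k$ is affine-linear in $x_1,\dots,x_r$, with linear part $x_k$ for $1\le k\le r$, equal to $0$ for $r<k\le n-r$, and equal to $-x_{\,n+1-k}$ for $n-r<k\le n$. Substituting into $g_i$, the lower-order part of $g_i$ contributes only terms of degree $<i$ in the $x_k$, while the degree-$i$ part of $p_i(\nu)=\sum_k\nu_k^{\,i}$ is $\sum_{k=1}^{r}x_k^{\,i}+(-1)^{i}\sum_{k=1}^{r}x_k^{\,i}=(1+(-1)^{i})\,p_i(x_1,\dots,x_r)$. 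Hence
\[ C_{2j}(x_1,\dots,x_r)=2\,p_{2j}(x_1,\dots,x_r)+(\text{terms of degree}<2j)\qquad(j\ge 1);\]
for odd $i$ the prospective leading term cancels, which is precisely why one restricts to even indices.

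\emph{Step 2 (algebraic independence).} Here I would use the Jacobian criterion. First, $p_2,p_4,\dots,p_{2r}$ are algebraically independent because
\[ \det\!\Bigl(\frac{\partial p_{2j}}{\partial x_k}\Bigr)_{1\le j,k\le r}=\det\bigl(2j\,x_k^{2j-1}\bigr)=2^{r}\,r!\;\Bigl(\prod_{k=1}^{r}x_k\Bigr)\prod_{1\le k<l\le r}(x_l^2-x_k^2)\neq 0.\]
Now suppose $Q(C_2,C_4,\dots,C_{2r})=0$ for some nonzero $Q(y_1,\dots,y_r)$. Assign $y_j$ weight $2j$, and let $Q^{\circ}$ be the sum of the monomials of $Q$ of maximal weight $D$, so $Q^{\circ}\neq 0$. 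By Step 1 the degree-$D$ homogeneous component of $Q(C_2,\dots,C_{2r})$ is $Q^{\circ}(2p_2,\dots,2p_{2r})$, which is nonzero since $Q^{\circ}\neq0$ and the $p_{2j}$ are algebraically independent — contradicting $Q(C_2,\dots,C_{2r})=0$. Thus $C_2,\dots,C_{2r}$ are algebraically independent.

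\emph{Main obstacle.} Step 2 is routine; the substance is Step 1 — knowing that $g_i$ is honestly polynomial (its apparent poles along $z_k=z_j$ cancel) with leading form $p_i$, which is exactly the input furnished by the previous lemma, and spotting the parity cancellation that forces the restriction to even indices. Once that is in hand, the polynomiality of $C_i$ and the shape of its leading form follow automatically, since $C_i=g_i\circ(\text{affine map})$.
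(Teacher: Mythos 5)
Your proof is correct and follows essentially the same route as the paper: both identify the top-degree homogeneous part of $C_{2i}$ as $2p_{2i}(x_1,\dots,x_r)$ via the parity cancellation in $\tilde x$, and then deduce independence from the algebraic independence of the even power sums. You are somewhat more careful than the paper at two points — you note that the claim must be read as concerning $C_2,\dots,C_{2r}$ (infinitely many polynomials in $r$ variables cannot be algebraically independent), and you supply the weighted leading-form/Jacobian argument that the paper's final ``it follows'' leaves implicit — but the underlying argument is the same.
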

\begin{proof}
Consider the polynomials $c_{2i}(x_1,\ldots,x_r,0,\ldots,0,-x_r,\ldots,-x_1)$. 
From the proof of previous lemma it follows that for $\lambda = (\lambda_1,\ldots,\lambda_n)$ we have 
\[ c_{2i}(\lambda_1,\ldots,\lambda_n) = p_{2i}(\lambda_1+\rho_1,\ldots,\lambda_n+\rho_n) + \textrm{LowerDegreeTerms} = F(\lambda_1+\rho_1,\ldots,\lambda_1+\rho_n) \] where $F$ is a symmetric polynomial in $n$ variables. 
We now consider highest weights $\lambda$ of the form $\tilde{x}$. We have 
\[ c_{2i}(\tilde{x}) = F(x_1+\rho_1,\ldots,x_r+\rho_r,\rho_{r+1},\ldots,\rho_{n-r},-x_r+\rho_{n-r+1},\ldots,-x_n+\rho_{1}) \]
But note that $\rho_{i} = - \rho_{n+1-i}$ and $\rho_{i} + \frac{2i-1}{2} = \frac{n}{2}$ for all $1 \leq i \leq n$. Thus, 
\[ c_{2i}(\tilde{x}+\delta) = F(x_1+\frac{n}{2},\ldots, x_r+\frac{n}{2},\rho_{r+1},\ldots,\rho_{n-r},-x_r-\frac{n}{2},\ldots,-x_1-\frac{n}{2}) \] where $\delta = (\frac{1}{2},\ldots,\frac{2r-1}{2},0,\ldots,0,-\frac{2r-1}{2},\ldots,-\frac{1}{2})$. 
Clearly this is a symmetric polynomial in variables $x_1,\ldots,x_r$.

Thus we can write 
\begin{align*}
C_{2i}\left(x_1+\frac{1}{2},\ldots,x_r+\frac{2r-1}{2}\right) &= \\ p_{2i}\biggl(x_1+\frac{n}{2},\ldots,x_r+\frac{n}{2},\rho_{r+1},\ldots,
  & \rho_{n-r},-(x_r+\frac{n}{2}),\ldots,-(x_1+\frac{n}{2})\biggr)+\textrm{LowerDegreeTerms} 
\end{align*}

So we have \[C_{2i}\left(x_1+\frac{1}{2},\ldots,x_r+\frac{2r-1}{2}\right) \in \C[a_0,a_1,\ldots,a_{2i}] \] where 
\[ a_i = p_i\left(x_1+\frac{n}{2},\ldots,x_r+\frac{n}{2},\rho_{r+1},\ldots,\rho_{n-r},-(x_r+\frac{n}{2}),\ldots,-(x_r+\frac{n}{2})\right) \]
But for odd $i$, \[ p_i\left(x_1+\frac{n}{2},\ldots,x_r+\frac{n}{2},\rho_{r+1},\ldots,\rho_{n-r},-(x_r+\frac{n}{2}),\ldots,-(x_r+\frac{n}{2})\right) = 0 ,\]  and for even $i$,  \[ p_i\left(x_1+\frac{n}{2},\ldots,x_r+\frac{n}{2},\rho_{r+1},\ldots,\rho_{n-r},-(x_r+\frac{n}{2}),\ldots,-(x_1+\frac{n}{2})\right) = 2p_{i}\left(x_1+\frac{n}{2},\ldots,x_r+\frac{n}{2}\right) .\]
Thus $C_{2i}\left(x_1+\frac{1}{2},\ldots,x_r+\frac{2r-1}{2}\right)$ is in algebra generated by polynomials $p_{2k}(x_1+\frac{n}{2},\ldots,x_r+\frac{n}{2})$ for $0 \leq k \leq i$. 
We know that power sum symmetric functions are algebraically independent and since highest degree term of $C_{2i}(x_1+\frac{1}{2},\ldots,x_r+\frac{2r-1}{2})$ is just \[ 2p_{2i}\left(x_1+\frac{n}{2},\ldots,x_r+\frac{n}{2}\right) \] it follows that the polynomials $C_{2i}(x_1+\frac{1}{2},\ldots,x_r+\frac{2r-1}{2})$ are algebraically independent. But these are just our original polynomials $C_{2i}$ with a shift, and so polynomials $C_{2i}$ are also algebraically independent. 
\end{proof}
\begin{lem}
Given $i \in \N$, the polynomials $C_{2i+1}(x_1,\dots,x_r)$ are in algebra generated by polynomials $C_{2i}(x_1,\ldots,x_r)$ for $i \geq 0$. 
\end{lem}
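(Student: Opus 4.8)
The plan is to re-run the argument in the proof of Lemma~\ref{lma} with the odd exponent $p=2i+1$ in place of $p=2i$: the part of that proof which identifies $c_p(\lambda)$ with a symmetric polynomial in $\lambda+\rho$ and then kills the odd power sums upon specializing the ``middle'' coordinates never used the parity of $p$, so it applies verbatim. Writing $v_j:=x_j+\rho_j$ for $1\le j\le r$ and using $\rho_\ell=-\rho_{n+1-\ell}$, the same computation that produced the displayed formulas in the proof of Lemma~\ref{lma} gives
\[ C_{2i+1}(x_1,\dots,x_r)=F_{2i+1}\bigl(v_1,\dots,v_r,\rho_{r+1},\dots,\rho_{n-r},-v_r,\dots,-v_1\bigr), \]
where $F_{2i+1}$ is the symmetric polynomial $c'_{2i+1}$, of degree $2i+1$ in its $n$ arguments (that $c'_p$ is a symmetric polynomial for every exponent, with leading term $p_p$, is exactly what the argument in the proof of the first lemma of this section establishes).

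Next I would expand $F_{2i+1}$ as a polynomial in the power sums $p_1,\dots,p_{2i+1}$ and substitute. Under this substitution each $p_k$ is sent to $p_k(v_1,\dots,v_r)+\sum_{\ell=r+1}^{n-r}\rho_\ell^{\,k}+(-1)^k p_k(v_1,\dots,v_r)=\bigl(1+(-1)^k\bigr)p_k(v_1,\dots,v_r)+\mathrm{const}$, so all odd power sums become constants and only $p_2,p_4,\dots,p_{2i}$ in the variables $v_1,\dots,v_r$ survive. Hence
\[ C_{2i+1}(x_1,\dots,x_r)\in\C\bigl[p_2(v_1,\dots,v_r),p_4(v_1,\dots,v_r),\dots,p_{2i}(v_1,\dots,v_r)\bigr]. \]
Now I would invoke the other half of the computation in the proof of Lemma~\ref{lma}: for each $k$ the degree-$2k$ homogeneous part of $C_{2k}(x_1,\dots,x_r)$ equals $2p_{2k}(v_1,\dots,v_r)$ plus a polynomial in $p_2,\dots,p_{2k-2}$ in the $v_j$'s, which by an induction on $k$ shows that the subalgebra of $\C[x_1,\dots,x_r]$ generated by $C_2,\dots,C_{2i}$ is \emph{equal} to $\C[p_2(v_1,\dots,v_r),\dots,p_{2i}(v_1,\dots,v_r)]$, not merely contained in it. Combining the two displays yields $C_{2i+1}\in\C[C_2,\dots,C_{2i}]$, which is the assertion.

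I do not anticipate a real obstacle. The only points needing care are: (i) checking that the reductions in the proof of Lemma~\ref{lma} are genuinely parity-free, in particular that $c'_p=F_p$ is a symmetric polynomial for odd $p$ as well (this is where the polynomiality argument of the first lemma, equivalently the Harish-Chandra theorem, is used); and (ii) the bookkeeping in the induction showing the even restricted Casimirs generate \emph{all} of $\C[p_2(v_1,\dots,v_r),\dots,p_{2i}(v_1,\dots,v_r)]$, so that no even power sum is missed. If one prefers to stay literally within the notation of Lemma~\ref{lma}, one can first apply the shift $x_j\mapsto x_j+\tfrac{2j-1}{2}$ and replace $v_j$ by $x_j+\tfrac n2$ throughout; the two versions differ only by the automorphism $x_j\mapsto x_j-\rho_j$ of $\C[x_1,\dots,x_r]$ and so prove the same statement.
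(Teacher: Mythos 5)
Your proposal follows essentially the same route as the paper's own proof: both first observe that the computation from the proof of Lemma~\ref{lma} is parity-free, so that $C_{2i+1}$ (after the shift) lies in the algebra generated by the even power sums $p_{2k}$ of the shifted variables, and both then run the induction $C_{2k+2}=2p_{2k+2}+\textrm{LowerDegreeTerms}$ to conclude that these even power sums lie in $\C[C_0,C_2,\ldots,C_{2i}]$. Your write-up is somewhat more careful about the bookkeeping (in particular about why the odd power sums degenerate to constants under the specialization), but the argument is the same.
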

\begin{proof}
We consider polynomials $c_{2i+1}(x_1,\ldots,x_r,0,\ldots,0,-x_r,\ldots,-x_1)$. From the proof of previous lemma it follows that polynomials $C_{2i+1}(x_1+\frac{1}{2},\ldots,x_r+\frac{2r-1}{2})$ are in algebra generated by $p_{2k}(x_1+\frac{n}{2},\ldots,x_r+\frac{n}{2})$ for $0 \leq k \leq i$. So it is enough to show that each of $p_{2k}(x_1+\frac{n}{2},\ldots,x_r+\frac{n}{2})$ is in algebra generated by polynomials $C_{2k}(x_1+\frac{1}{2},\ldots,x_r+\frac{2r-1}{2})$. 
We use induction to prove that. Clearly this is true for $i=0,1$. We assume that this is true for $i \leq k$ then, 
\[ C_{2k+2} = 2p_{2k+2}+\textrm{LowerDegreeTerms} \]
But by the hypothesis \[ \textrm{LowerDegreeTerms} \in \C [C_0,C_2,\ldots,C_{2k}] \]so by the above equality, \[ p_{2k+2} \in \C [C_0,C_2,\ldots,C_{2k},C_{2k+2}] \] and we are done. 
\end{proof}

Thus from above lemma's it follows that, 

\begin{prop} 
Given $n,r,s$ with $n \geq 2r \geq 0 ;  n,r \in \N , s \in \C$ and a partition $\lambda \in \mathcal{P}_r$.
Then there exist unique coefficients $b_\mu$ such that \[ P_{\lambda}(\mu+\rho;1;s-\frac{n-1}{2}) = \sum b_{\mu} C_{2\mu} \] 
\end{prop}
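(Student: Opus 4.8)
The plan is to find a polynomial ring $\mathcal{R}$ which contains the left-hand side and for which $\{C_{2\mu}:\mu\in\mathcal{P}_r\}$ is a $\mathbb{C}$-basis; the proposition is then immediate. Take
\[ \mathcal{R}:=\bigl\{\,g\in\mathbb{C}[x_1,\dots,x_r]\ :\ g\text{ is invariant under all permutations and sign changes of }x_1+\rho_1,\dots,x_r+\rho_r\,\bigr\}. \]
First I would check that the left-hand side, regarded as a polynomial in $x=(x_1,\dots,x_r)$, lies in $\mathcal{R}$. The specialization $\tau=1$ of the combinatorial formula of Section \ref{2} is regular: since $b_\mu(\mathfrak{b};1)=1$ for every box, $\Phi_T(1)=1$, so $P_\lambda(x;1;\alpha)=\sum_T\prod_{\mathfrak{b}\in\lambda}\bigl(x_{T(\mathfrak{b})}^2-(a_{\lambda}'(\mathfrak{b})+(r-T(\mathfrak{b})-l_{\lambda}'(\mathfrak{b}))+\alpha)^2\bigr)$ is an honest polynomial in $x$ of degree $2|\lambda|$, invariant under permutations and sign changes of the $x_i$. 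Hence $P_\lambda(x_1+\rho_1,\dots,x_r+\rho_r;1;s-\tfrac{n-1}{2})\in\mathcal{R}$ and has degree $2|\lambda|$. (Equivalently this is the content of Theorem \ref{hadi} together with the Harish--Chandra theorem noted before Definition \ref{resCas}: up to the nonzero scalar $y_\lambda$ this polynomial is the eigenvalue function of a central element, restricted to highest weights of the form $\tilde{x}$.)

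Next I would prove $\mathcal{R}=\mathbb{C}[C_2,C_4,\dots,C_{2r}]$, freely. The proof of Lemma \ref{lma} already shows that $C_{2i}\in\mathcal{R}$ and that the top-degree homogeneous part of $C_{2i}$, as a polynomial in $x$, equals $2p_{2i}(x)$. Filtering $\mathcal{R}$ by degree, the top-degree part of any $g\in\mathcal{R}$ is an honestly $\mathrm{BC}_r$-symmetric homogeneous polynomial, and conversely every such $h$ is the top part of $h(x_1+\rho_1,\dots,x_r+\rho_r)\in\mathcal{R}$; hence $\mathrm{gr}\,\mathcal{R}=\mathbb{C}[x_1,\dots,x_r]^{\mathrm{BC}_r}=\mathbb{C}[p_2,p_4,\dots,p_{2r}]$. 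Since the symbols of $C_2,\dots,C_{2r}$ freely generate $\mathrm{gr}\,\mathcal{R}$, the elements $C_2,\dots,C_{2r}$ generate $\mathcal{R}$, and Lemma \ref{lma} says they are algebraically independent, so $\mathcal{R}=\mathbb{C}[C_2,\dots,C_{2r}]$. (This in particular recovers the two lemmas just preceding the proposition: every $C_{2i+1}$ and every $C_{2i}$ with $i>r$ lies in $\mathbb{C}[C_2,\dots,C_{2r}]$.)

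Now I would show $\{C_{2\mu}:\mu\in\mathcal{P}_r\}$ is a basis of $\mathcal{R}$. For linear independence: since $C_0=n\neq 0$, the top-degree part of $C_{2\mu}$ is a nonzero scalar times $\prod_{\mu_i>0}p_{2\mu_i}$; these products of power sums are pairwise distinct monomials in $p_2,p_4,\dots$ for distinct $\mu\in\mathcal{P}_r$, and each involves at most $r$ factors, so they stay linearly independent in $r$ variables (triangularity of the $p$'s against the $m$'s in dominance order); therefore a nontrivial relation among the $C_{2\mu}$ would have a nonzero top-degree part contributed by the terms of maximal $|\mu|$, a contradiction. For spanning: from $\mathrm{gr}\,\mathcal{R}=\mathbb{C}[p_2,\dots,p_{2r}]$ and the conjugation bijection between partitions of $e$ with at most $r$ parts and partitions of $e$ with parts $\le r$ one gets $\dim_{\mathbb{C}}\mathcal{R}_{\le 2d}=\#\{\mu\in\mathcal{P}_r:|\mu|\le d\}$; the $C_{2\mu}$ with $|\mu|\le d$ form a linearly independent subset of $\mathcal{R}_{\le 2d}$ of exactly this cardinality, hence a basis of it, and letting $d\to\infty$ gives the claim. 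Expanding the left-hand side, which sits in $\mathcal{R}_{\le 2|\lambda|}$, in this basis produces the unique coefficients $b_\mu$ (with $b_\mu=0$ whenever $|\mu|>|\lambda|$).

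The one genuinely delicate step is the identification $\mathcal{R}=\mathbb{C}[C_2,\dots,C_{2r}]$, which forces one to combine the top-degree computation from the proof of Lemma \ref{lma} with the passage to the associated graded algebra; an alternative is to route through Harish--Chandra, using that the eigenvalue map identifies $Z(\mathfrak{U}(\mathfrak{gl}_n))$ with the shifted-symmetric polynomials, that restriction to $\lambda=\tilde{x}$ surjects onto $\mathcal{R}$ because $p_{2k}(\lambda+\rho)|_{\lambda=\tilde{x}}=2p_{2k}(x_1+\rho_1,\dots,x_r+\rho_r)+\mathrm{const}$, and that the two preceding lemmas together with algebraic independence cut the generating set down to $C_2,\dots,C_{2r}$. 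The remaining ingredients---regularity of the $\tau=1$ limit, the $\mathrm{BC}_r$-symmetry and degree of $P_\lambda$, the top-degree formula for $C_{2\mu}$, and the partition count---are routine.
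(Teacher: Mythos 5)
Your proof is correct, but it takes a genuinely different route from the paper's, which disposes of this proposition in one line by appealing to the two preceding lemmas together with the Harish--Chandra remark made before Definition \ref{resCas} (Theorem \ref{hadi} realizes the left-hand side as the eigenvalue of an invariant operator, hence as a polynomial in the $c_i$ restricted to $\tilde{x}$; the odd-index lemma reduces to even indices; Lemma \ref{lma} gives uniqueness). You instead avoid the representation-theoretic input entirely: membership of the left-hand side in your ring $\mathcal{R}$ of shifted $\mathrm{BC}_r$-invariants is read off from the $\tau=1$ combinatorial formula, and the role of the two lemmas is played by the associated-graded identification $\mathrm{gr}\,\mathcal{R}=\mathbb{C}[p_2,\dots,p_{2r}]$, giving $\mathcal{R}=\mathbb{C}[C_2,\dots,C_{2r}]$ freely. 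What your route buys is precision on the one point the paper glosses over: since $\mathbb{C}[x_1,\dots,x_r]$ has transcendence degree $r$, Lemma \ref{lma} cannot hold verbatim for all $i\geq 0$ (and $C_0$ is a constant), so the uniqueness assertion genuinely depends on specifying the index set for $\mu$; your dimension count via the conjugation bijection shows that $\{C_{2\mu}:\mu\in\mathcal{P}_r\}$ (rather than, say, all partitions, for which uniqueness would fail) is an honest basis of $\mathcal{R}$. What the paper's route buys is brevity and the conceptual placement of the statement inside the Harish--Chandra picture, which you also sketch as an alternative. The only caveats in your write-up are minor: the permutation-invariance of the $\tau=1$ specialization is quoted rather than checked from the tableau sum, and the value $C_0=n$ deserves a word (it is the eigenvalue of $\mathrm{tr}(\mathbf{E}^0)=\mathrm{tr}(I_n)$), but neither affects the argument.
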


Thus our problem can now be reformulated as asking for the explicit expression for coefficients $b_\mu$. In the next section we give the explicit formula for $b_\mu$ when $b_{\mu}$'s are the coefficient of top-order homogeneous terms which also proves the theorem. 

\section{Proof} \label{4}

\begin{prop}\label{prop}

Let $r,n$ be such that $ n \geq 2r \geq 0; n,r \in \N; s \in \C$. Let $HSP_{\lambda}$ denote the highest degree homogeneous terms in $P_{\lambda}\left(\mu+\rho;1 ; s- \frac{n-1}{2}\right)$ Then $HSP_{\lambda}$ is the Schur polynomial $s_{\lambda}$ for partition $\lambda$ evaluated at squares. That is if \[ s_{\lambda} = \sum_{\mu} a_{\mu}p_{\mu} \] then 
\[ HSP_{\lambda} = \sum_{\mu} a_{\mu}p_{2\mu} \]
\end{prop}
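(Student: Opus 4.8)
The plan is to compute directly from the combinatorial formula for $P_\lambda(x;\tau;\alpha)$ at the end of Section~\ref{2}. In that formula the sum is over reverse tableaux $T$, the coefficient $\Phi_T(\tau)$ is free of the $x$-variables, and each factor $x_{T(\mathfrak{b})}^2-(a_\lambda'(\mathfrak{b})+\tau(r-T(\mathfrak{b})-l_\lambda'(\mathfrak{b}))+\alpha)^2$ is a polynomial of degree $2$ in the $x$'s whose leading term is $x_{T(\mathfrak{b})}^2$ and whose remaining term is a constant in $x$ (it depends on $\tau$ and $\alpha$ only). Hence each summand has $x$-degree exactly $2|\lambda|$ with leading homogeneous part $\Phi_T(\tau)\prod_{\mathfrak{b}\in\lambda}x_{T(\mathfrak{b})}^2$, so the degree-$2|\lambda|$ homogeneous component of $P_\lambda(x;\tau;\alpha)$ is $\sum_T\Phi_T(\tau)\prod_{\mathfrak{b}\in\lambda}x_{T(\mathfrak{b})}^2$. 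Substituting $x_i=\mu_i+\rho_i$ is a degree-one change of variables, so it does not alter the top homogeneous part in the $\mu_i$, and the value of $\alpha=s-\tfrac{n-1}{2}$ is irrelevant for it; therefore $HSP_\lambda=\sum_T\Phi_T(1)\prod_{\mathfrak{b}\in\lambda}\mu_{T(\mathfrak{b})}^2$.

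The next step is the observation that $\Phi_T$ trivializes at $\tau=1$. Indeed $b_\mu(\mathfrak{b};1)=\dfrac{a_\mu(\mathfrak{b})+l_\mu(\mathfrak{b})+1}{a_\mu(\mathfrak{b})+l_\mu(\mathfrak{b})+1}=1$ for every partition $\mu$ and box $\mathfrak{b}$, so each factor $b_{\lambda^{(i)}}(\mathfrak{b};1)/b_{\lambda^{(i-1)}}(\mathfrak{b};1)$ in the definition of $\Phi_T$ equals $1$ and hence $\Phi_T(1)=1$ for every reverse tableau $T$. Writing $c_i(T)$ for the number of boxes of $T$ carrying the entry $i$, this reduces the previous expression to $HSP_\lambda=\sum_T\prod_{\mathfrak{b}\in\lambda}\mu_{T(\mathfrak{b})}^2=\sum_T\prod_{i=1}^r(\mu_i^2)^{c_i(T)}$.

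It remains to identify this sum with a Schur polynomial in the squared variables. The reversal map $T\mapsto\overline{T}$, $\overline{T}(\mathfrak{b})=r+1-T(\mathfrak{b})$, is a bijection from reverse tableaux of shape $\lambda$ with entries in $\{1,\dots,r\}$ onto semistandard Young tableaux of the same shape and alphabet, and it sends a content $(c_1,\dots,c_r)$ to its reverse $(c_r,\dots,c_1)$. Combining this with the classical tableau formula $s_\lambda(y_1,\dots,y_r)=\sum_{S}\prod_i y_i^{c_i(S)}$ (sum over SSYT $S$) and the symmetry of $s_\lambda$ in its variables gives $\sum_T\prod_i y_i^{c_i(T)}=s_\lambda(y_1,\dots,y_r)$; taking $y_i=\mu_i^2$ yields $HSP_\lambda=s_\lambda(\mu_1^2,\dots,\mu_r^2)$. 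Finally, expanding $s_\lambda=\sum_\nu a_\nu p_\nu$ (so the $a_\nu$ are the coefficients $a_\mu$ of the statement) and using $p_k(\mu_1^2,\dots,\mu_r^2)=\sum_i\mu_i^{2k}=p_{2k}(\mu_1,\dots,\mu_r)$, hence $p_\nu(\mu_1^2,\dots,\mu_r^2)=p_{2\nu}(\mu_1,\dots,\mu_r)$, we obtain $HSP_\lambda=\sum_\nu a_\nu p_{2\nu}$, as claimed.

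There is no real obstacle in this argument; it rests on the two elementary facts that the $\rho$-shift and the $\alpha$-parameter leave the leading homogeneous term untouched and that $b_\mu(\cdot\,;1)\equiv1$, whence $\Phi_T(1)\equiv1$. The only points calling for a little care are the degree bookkeeping in the first paragraph — one should note that no cancellation can lower the degree, since after setting $\tau=1$ the leading part is the manifestly nonzero polynomial $s_\lambda(\mu_1^2,\dots,\mu_r^2)$ — and the standard reverse-tableau/SSYT bijection together with the content-reversal it induces, which is exactly what lets us pass from reverse tableaux to the usual combinatorial description of $s_\lambda$.
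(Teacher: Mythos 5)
Your proposal is correct and follows essentially the same route as the paper: extract the top homogeneous part $\sum_T \Phi_T(1)\prod_{\mathfrak{b}}x_{T(\mathfrak{b})}^2$ from the combinatorial formula, observe that $\tau=1$ makes every $\Phi_T$ equal to $1$, and use the reverse-tableau/SSYT bijection plus the symmetry of $s_\lambda$ to identify the result with $s_\lambda(x_1^2,\dots,x_r^2)$. Your write-up is in fact a bit more careful than the paper's, spelling out why $b_\mu(\cdot\,;1)\equiv 1$, why the $\rho$-shift and the $\alpha$-parameter cannot affect the leading term, and why no cancellation occurs.
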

\begin{proof}
We use the combinatorial definition. Note that we have $\tau =1$ and so $\Psi_T(\tau) = 1 $ for all reverse tableau $T$. Thus the top homogeneous part becomes for polynomials $P_{\lambda}(x;1;\alpha)$ becomes 
\[ \sum_{T} \prod_{\mathfrak{b} \in \lambda} x^2_{T(\mathfrak{b})} \]
where $T$ are reverse tableau of shape $\lambda$ with entries in $\{ 1,2,\ldots,r \}$.But each such $T$ can be treated as a semi-standard tableau with entries $r+1-\mathfrak{b}(i,j)$. And there is one-one map namely $\mathfrak{b}(i,j) \rightarrow r+1-\mathfrak{b}(i,j)$ between set of reverse tableau of shape $\lambda$ with entries in $ \{ 1,2,\ldots,r \}$ and set of semi-standard tableau of shape $\lambda$ with entries in $\{ 1,2,\ldots,r \}$. And thus by definition the sum \[ \sum_{T} \prod_{\mathfrak{b} \in \lambda} x^2_{T(\mathfrak{b})} \] becomes the Schur polynomial evaluated at squares, that is \[ s_{\lambda}(x_1^2,\ldots,x_r^2) \] and the statement follows. 
\end{proof}

\begin{prop}
Given $i \in \N$, the top degree homogeneous term of $C_{2i}(x_1,x_2,\ldots,x_r)$ is $2p_{2i}(x_1,x_2,\ldots,x_r)$
\end{prop}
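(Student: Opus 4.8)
The plan is to reduce this to the description of the top-degree part of the \emph{unrestricted} eigenvalue $c_p$ established in the proof of the first lemma of Section~\ref{3} (and reused in the proof of Lemma~\ref{lma}), and then to observe that the linear substitution defining $C_{2i}$ preserves polynomial degree; here one should take $i\ge 1$, since for $i=0$ the polynomial $C_0$ is the constant $c_0(\tilde x)=n$. First I would recall that $c_p(\lambda_1,\ldots,\lambda_n)$ is a polynomial in $\lambda_1,\ldots,\lambda_n$ whose top-degree homogeneous component is the power sum $p_p(\lambda_1,\ldots,\lambda_n)=\sum_{j=1}^{n}\lambda_j^{p}$: writing $c_p(\lambda)=c'_p(\lambda+\rho)$ with $c'_p(x_1,\ldots,x_n)=p_p(x_1,\ldots,x_n)+\textrm{LowerDegreeTerms}$, the shift by $\rho$ and the additive constant $\tfrac{n-1}{2}$ appearing inside $c_p$ only modify strictly lower-degree terms, so the degree-$p$ homogeneous component of $c_p(\lambda)$ is $\sum_{j=1}^{n}\lambda_j^{p}$.

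Next, by Definition~\ref{resCas} we have $C_{2i}(x_1,\ldots,x_r)=c_{2i}(\tilde x)$ with $\tilde x=(x_1,\ldots,x_r,0,\ldots,0,-x_r,\ldots,-x_1)$, whose coordinates are homogeneous of degree $1$ (or $0$) in $x_1,\ldots,x_r$. Since substituting homogeneous-linear expressions into a polynomial cannot raise its degree, applying this substitution to the decomposition $c_{2i}(\lambda)=p_{2i}(\lambda)+\textrm{LowerDegreeTerms}$ turns every term of the $\textrm{LowerDegreeTerms}$ into something of degree $<2i$ in $x_1,\ldots,x_r$; hence the degree-$2i$ homogeneous component of $C_{2i}$ is exactly $p_{2i}(\tilde x)$. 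I would then conclude with the direct evaluation
\[
p_{2i}(\tilde x)=\sum_{k=1}^{r}x_k^{2i}+\sum_{k=1}^{r}(-x_k)^{2i}=2\sum_{k=1}^{r}x_k^{2i}=2\,p_{2i}(x_1,\ldots,x_r),
\]
where the $n-2r$ middle entries of $\tilde x$ contribute $0$ and the second equality uses that $2i$ is even. Since $2\,p_{2i}(x_1,\ldots,x_r)$ is a nonzero homogeneous polynomial of degree $2i$, it is the top-degree homogeneous term of $C_{2i}$, as claimed.

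I do not expect a genuine obstacle. The two points that need a little care are the assertion that a homogeneous-linear change of variables cannot increase degree --- so the leading term $p_{2i}(\tilde x)$ cannot be cancelled by lower-order contributions --- and the parity observation $(-x_k)^{2i}=x_k^{2i}$ that supplies the factor $2$; the latter is exactly why the statement concerns the even-index polynomials $C_{2i}$, as for an odd exponent the two blocks of $\tilde x$ cancel in the power sum and its degree drops.
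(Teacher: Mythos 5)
Your proof is correct and follows essentially the same route as the paper: both arguments rest on the fact, established in the first lemma of Section~\ref{3}, that the top-degree homogeneous part of $c_p$ is the power sum $p_p$, and both obtain the factor $2$ from the pairing of the entries $x_k$ and $-x_k$ in $\tilde x$ together with the evenness of the exponent. The only cosmetic difference is that you substitute $\tilde x$ directly into $c_{2i}=p_{2i}+\textrm{LowerDegreeTerms}$, whereas the paper reuses the shifted identity from Lemma~\ref{lma} and then removes the shift by the binomial theorem; your remark that $i\ge 1$ is needed (since $C_0=n\ne 2r$ in general) is a sensible precision the paper omits.
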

\begin{proof}
We earlier proved in the proof of Lemma \ref{lma} that 
\[ C_{2i}\left(x_1+\frac{1}{2},\ldots,x_r+\frac{2r-1}{2}\right) = 2p_{2i}\left(x_1+\frac{n}{2},\ldots,x_r+\frac{n}{2}\right)+ \textrm{LowerDegreeTerms} \]
So 
\[ C_{2i}\left(x_1,\ldots,x_r \right) = 2p_{2i}\left(x_1+\frac{n-1}{2},\ldots,x_r+\frac{n-(2r-1)}{2}\right)+\textrm{LowerDegreeTerms} \]
But by binomial theorem 
\[ p_{2i}\left(x_1+\frac{n}{2},\ldots,x_r+\frac{n}{2}\right) = \sum_{i=1}^{r} \left(x_i+\frac{n-(2i-1)}{2}\right)^{2i} = \sum_{i=1}^{r} x_i^r + \textrm{LowerDegreeTerms} \]
Thus, it follows that 
\[ C_{2i}(x_1,\ldots,x_r) = 2p_{2i}(x_1,\ldots,x_r) + \textrm{LowerDegreeTerms} \]
\end{proof}

\begin{proof}
Let $\mu$ be any partition of length $s(\mu)$. Then using previous proposition since all such $C_{2u}$ are independent, we see that 
\[ C_{(2\mu_1,\ldots,2\mu_{s(\mu)})} = 2^{s(\mu)}p_{2\mu} + \text{Lower Order Terms} \]
But we also know due to Proposition \ref{prop} that, 
\[ P_{\lambda}(\mu+\rho;1;s-\frac{n-1}{2}) = \sum_{\mu} a_{\mu} p_{2\mu} + \text{Lower Order terms} \]
Thus \[ P_{\lambda}(\mu+\rho;1;s-\frac{n-1}{2}) = \sum_{\mu} \frac{a_{\mu}}{2^{s(\mu)}} C_{2\mu} + \text{LowerOrderTerms} \]

\end{proof}

This shows that coefficient of higher degree terms are independent of $n,s$ and in-fact for $r$ sufficiently large($r \geq |\lambda|$), the coefficients are independent of $r$. Since the coefficients of higher degree have connection with the Schur functions, it suggest that general coefficients might have connection with the Schur-$q$ functions \cite{Schur1911}.

%%%%%%%%%%%%%%%%%%%%%%%%%%%%%%%%%%%
%
%

\bibliographystyle{alpha}
\bibliography{citation}

\end{document}